\begin{document}

\title{Some Extensions of the Crouzeix-Palencia Result}
\author{Trevor Caldwell%
\and
 Anne Greenbaum%
\and
 Kenan Li%
    \thanks{University of Washington, Applied Mathematics Dept., Box 353925,
            Seattle, WA 98195.  This work was supported in part by NSF grant DMS-1210886.}
}

\maketitle

\begin{abstract}
In [{\em The Numerical Range is a $(1 + \sqrt{2})$-Spectral Set}, SIAM J. Matrix Anal. Appl. 38 (2017), 
pp.~649-655],
Crouzeix and Palencia show that the closure of the
numerical range of a square matrix or linear operator $A$ is a $(1 + \sqrt{2})$-spectral
set for $A$; that is, for any function $f$ analytic in the interior of the numerical range $W(A)$
and continuous on its boundary,
the inequality $\| f(A) \| \leq (1 + \sqrt{2} ) \| f \|_{W(A)}$ holds, where the norm
on the left is the operator 2-norm and $\| f \|_{W(A)}$ on the right denotes the supremum of $| f(z) |$
over $z \in W(A)$.  In this paper, we show how the arguments in their paper can be extended
to show that other regions in the complex plane that do {\em not} necessarily contain $W(A)$
are $K$-spectral sets for a value of $K$ that may be close to $1 + \sqrt{2}$. 
We also find some special cases in which the constant $(1 + \sqrt{2})$ for $W(A)$ can be replaced by $2$,
which is the value conjectured by Crouzeix.
\end{abstract}

\section{Introduction}
Let $A$ be a square matrix or a bounded linear operator on a Hilbert space.
In \cite{CP2017}, Crouzeix and Palencia establish that closure of the numerical range of $A$,
\[
W(A) = \{ \langle Aq,q \rangle : \langle q,q \rangle = 1 \} ,
\]
is a $(1 + \sqrt{2})$-spectral set for $A$; that is, for any function $f$ analytic
in the interior of $W(A)$ and continuous on its boundary, 
\[
\| f(A) \| \leq ( 1 + \sqrt{2} ) \| f \|_{W(A)} ,
\]
where the norm on the left is the operator 2-norm and $\| f \|_{W(A)}$ on the right
denotes the supremum of $| f(z) |$ over $z \in W(A)$.  Crouzeix's conjecture is that
$\| f(A) \| \leq 2 \| f \|_{W(A)}$ \cite{Crouzeix2}.

In this paper we show how the arguments in \cite{CP2017} can be extended to show that 
other regions $\Omega$ in the complex plane that do {\em not} necessarily contain $W(A)$ are
$K$-spectral sets for a value of $K$ that may be close to $1 + \sqrt{2}$.
We present numerical results that show what these values $K$ are for various regions
that contain the spectrum of $A$ but not necessarily all of $W(A)$.  In particular we 
consider disks of various sizes containing the spectrum of $A$.

In part of the paper, we limit the discussion to $n$ by $n$ matrices.  In this case, 
for any proper open subset $\Omega$ of $\mathbb{C}$ containing the spectrum of $A$,
there is a function $f$ that attains 
$\sup_{\hat{f} \in {\cal H} ( \Omega )} \| \hat{f} (A) \| / \| \hat{f} \|_{\Omega}$,
where ${\cal H} ( \Omega )$ denotes the set of analytic functions in $\Omega$.  Furthermore,
if $\Omega$ is simply connected, the form of $f$ is known \cite{Crouzeix1,Earl,GladerLindstrom}:
\[
f(z) = B \circ \varphi ( z) ,
\]
where $\varphi$ is any bijective conformal mapping from $\Omega$ to the unit disk ${\cal D}$ and
$B$ is a Blaschke product of degree at most $n-1$,
\begin{equation}
B(z) = e^{i \theta} \prod_{j=1}^{n-1} \frac{z - \alpha_j}{1 - \bar{\alpha}_j z} ,~~~| \alpha_j | \leq 1 .
\label{Blaschke}
\end{equation}
[Note that we have allowed $| \alpha_j | = 1$ in this definition so that the degree
of $B(z)$ can be less than $n-1$, since factors with $| \alpha_j | = 1$ are just unit scalars.]
We show that for this ``optimal'' $B$, if $\| B ( \varphi (A) ) \| > 1$ and if $y$ is a right
singular vector of $B( \varphi (A) )$ corresponding to the largest singular value, then 
$\langle By,y \rangle = 0$.  Using this result,
we are able to replace the bound $1 + \sqrt{2}$ in \cite{CP2017} by $2$ in some special cases.
In particular, we give a new proof of the Okubo and Ando result \cite{OkuboAndo} that if
$W(A) \subset {\cal D}$ then ${\cal D}$ is a 2-spectral set for $A$.  

The paper is organized as follows.  In Section 2, we prove the basic theorems extending 
results in \cite{CP2017} to other regions $\Omega$ containing the spectrum of $A$ but not 
necessarily all of $W(A)$.  The proofs of these theorems involve minor modifications of the
arguments in \cite{CP2017}.  We then limit the discussion to $n$ by $n$ matrices.
In Section 3, we use results from Section 2 to show that if one
replaces $W(A)$ by an appropriate region $\Omega$ a distance about $\epsilon$ inside $W(A)$,
then this region is a $K$-spectral set for $K = 1 + \sqrt{2} + O( \epsilon )$.
Section 4 contains numerical studies of the values of $K$ derived in Section 2,
when $\Omega$ is a disk containing the spectrum of $A$.
In Section 5 we derive a property of the optimal Blaschke product (\ref{Blaschke}). 
This is used in Section 6 to describe cases in which the bound $1 + \sqrt{2}$ can be replaced by $2$
and in which the bound in Theorem \ref{theorem:1} can be replaced by $2 + \delta$ from Lemma \ref{lemma:1}.
Section 7 describes a case in which the bound of Lemma \ref{lemma:1} is sharp, and in Section 8
we mention remaining open problems.

\section{Extensions of the Crouzeix-Palencia Arguments}
Let $\Omega$ be a region with smooth boundary containing the spectrum of $A$ in its interior.
Any function $f \in {\cal A} ( \Omega ) := {\cal H}( \Omega ) \cap C( \mbox{cl}( \Omega ))$, 
is defined via the Cauchy integral formula as
\[
f(A) = \frac{1}{2 \pi i} \int_{\partial \Omega} ( \sigma I - A )^{-1} f( \sigma )\,d \sigma .
\]
If we parameterize $\partial \Omega$ by arc length $s$ running from $0$ to $L$, then this
can be written as
\[
f(A) = \int_0^L \frac{\sigma' (s)}{2 \pi i} R( \sigma (s) , A ) f( \sigma (s))\,ds ,
\]
where $R( \sigma , A )$ is the resolvent, $( \sigma I - A )^{-1}$.

A key idea in \cite{CP2017} was to look at
\begin{equation}
g(A) = \int_0^L \frac{\sigma' (s)}{2 \pi i} R( \sigma (s) , A ) \overline{f( \sigma (s) )}\,ds .
\label{cauchytransform}
\end{equation}
Note that since $\overline{f( \sigma (s) )}$ is not analytic, one cannot apply the Cauchy integral
formula in its simplest form to the integral in (\ref{cauchytransform}).
Crouzeix and Palencia analyzed the operator
\[
S := f(A) + g(A )^{*} = \int_0^L \mu ( \sigma (s) , A ) f( \sigma (s) )\,ds , 
\]
where the Hermitian operator $\mu ( \sigma (s) , A )$ is
\begin{equation}
\mu ( \sigma (s) , A ) = \frac{\sigma' (s)}{2 \pi i} R( \sigma (s), A ) + \left[ 
\frac{\sigma' (s)}{2 \pi i} R( \sigma (s), A ) \right]^{*} . \label{mu}
\end{equation}
They argued that if $\Omega$ is a convex region that contains $W(A)$, then $\mu ( \sigma , A )$ is positive
semidefinite for $\sigma \in \partial \Omega$.  We are interested in regions $\Omega$
that do not necessarily contain $W(A)$, so we will define
\begin{equation}
M( \sigma , A ) := \mu ( \sigma , A ) - \lambda_{\min} ( \mu ( \sigma , A ) ) I , \label{M}
\end{equation}
where $\lambda_{\min} ( \mu ( \sigma , A ))$ is the infimum of the spectrum of 
$\mu ( \sigma , A )$ at the point $\sigma$ on $\partial \Omega$.  Thus, by definition, 
$M( \sigma , A )$ is positive semidefinite on $\partial \Omega$.

Using the same method of proof as in \cite{CP2017}, we establish the following:

\begin{lemma}
Let $\Omega$ be a region with smooth boundary containing the spectrum of $A$ in its interior.  
For $f \in {\cal A} ( \Omega )$ with $\| f \|_{\Omega} = 1$, let
\begin{equation}
S = f(A) + g(A )^{*} + \gamma I ,~~~ \gamma := - \int_0^L \lambda_{\min}( \mu ( \sigma (s), A ) ) 
f( \sigma(s) )\,ds .
\label{S}
\end{equation}
Then $\| S \| \leq 2 + \delta$ , where
\begin{equation}
\delta =  - \int_0^L \lambda_{\min} ( \mu ( \sigma (s),A) )\,ds . \label{delta}
\end{equation}
\label{lemma:1}
\end{lemma}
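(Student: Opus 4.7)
The plan is to reduce $S$ to an integral against the positive semidefinite operator $M(\sigma,A)$ and then estimate via a pointwise Cauchy--Schwarz inequality, exactly as in \cite{CP2017}, picking up the extra $\delta$ from the shift.

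First, I would rewrite $S$. By construction,
\[
f(A)+g(A)^{*}=\int_{0}^{L}\mu(\sigma(s),A)\,f(\sigma(s))\,ds,
\]
and the definitions of $\gamma$ and $M$ give
\[
S=\int_{0}^{L}\bigl[\mu(\sigma(s),A)-\lambda_{\min}(\mu(\sigma(s),A))I\bigr]\,f(\sigma(s))\,ds=\int_{0}^{L}M(\sigma(s),A)\,f(\sigma(s))\,ds.
\]
The advantage is that $M(\sigma(s),A)\succeq 0$ for every $s$ by definition, whereas $\mu$ need not be.

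Next, I would compute $\int_{0}^{L}M(\sigma(s),A)\,ds$. Applying the Cauchy integral formula to the constant function $1$ (valid since $\Omega$ contains the spectrum) yields
\[
\frac{1}{2\pi i}\int_{\partial\Omega}(\sigma I-A)^{-1}\,d\sigma=I,
\]
so $\int_{0}^{L}\mu(\sigma(s),A)\,ds=I+I^{*}=2I$, and therefore
\[
\int_{0}^{L}M(\sigma(s),A)\,ds=(2+\delta)I.
\]

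Finally, I would bound $\|S\|=\sup_{\|x\|=\|y\|=1}|\langle Sx,y\rangle|$. For each $s$, since $M(\sigma(s),A)$ is Hermitian positive semidefinite, the Cauchy--Schwarz inequality for the semi-inner product $(u,v)\mapsto\langle Mu,v\rangle$ gives
\[
|\langle M(\sigma(s),A)x,y\rangle|\le\langle M(\sigma(s),A)x,x\rangle^{1/2}\langle M(\sigma(s),A)y,y\rangle^{1/2}.
\]
Using $|f(\sigma(s))|\le\|f\|_{\Omega}=1$ and then Cauchy--Schwarz in $L^{2}(0,L)$,
\[
|\langle Sx,y\rangle|\le\left(\int_{0}^{L}\langle Mx,x\rangle\,ds\right)^{1/2}\left(\int_{0}^{L}\langle My,y\rangle\,ds\right)^{1/2}=\langle(2+\delta)x,x\rangle^{1/2}\langle(2+\delta)y,y\rangle^{1/2}=2+\delta,
\]
which is the desired bound.

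I do not anticipate a serious obstacle: the proof is a direct transcription of the Crouzeix--Palencia argument in which the subtraction of $\lambda_{\min}(\mu)I$ restores positive semidefiniteness at the price of an additive $\delta I$ in the identity $\int\mu\,ds=2I$. The only point that needs care is the justification of the pointwise Cauchy--Schwarz estimate on $|\langle Mx,y\rangle|$, but this is automatic once $M\succeq 0$ is established, which holds by the very definition of $M$ in (\ref{M}).
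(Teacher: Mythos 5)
Your proof is correct and is essentially identical to the paper's own argument: both rewrite $S$ as $\int_0^L M(\sigma(s),A)\,f(\sigma(s))\,ds$, use $\|f\|_\Omega=1$, apply the pointwise Cauchy--Schwarz inequality for the semidefinite form $\langle M\cdot,\cdot\rangle$ followed by Cauchy--Schwarz in $L^2(0,L)$, and conclude from $\int_0^L M(\sigma(s),A)\,ds=(2+\delta)I$. No changes needed.
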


\begin{proof}
Let $u$ and $v$ be any two unit vectors and, for convenience, write $M(s)$ for $M( \sigma (s) , A )$
and $\lambda_{\min} (s)$ for $\lambda_{\min} ( \mu ( \sigma (s),A ) )$
in (\ref{M}).  Then
\begin{eqnarray*}
| \langle Sv,u \rangle | & = & \left| \int_0^L \langle M(s) v , u \rangle 
f( \sigma (s))\,ds \right| \\
 & \leq & \int_0^L | \langle M(s) v , u \rangle |\,ds~~~
\mbox{(since $\| f \|_{\Omega} = 1$)} \\
 & \leq & \int_0^L \langle M(s) u , u \rangle^{1/2}
  \cdot \langle M(s) v , v \rangle^{1/2}\,ds~~~
\mbox{(Cauchy-Schwarz, since $M(s)$ is positive semidefinite)} \\
 & \leq & \left( \int_0^L \langle M(s) u, u \rangle\,ds \right)^{1/2}
  \left( \int_0^L \langle M(s) v, v \rangle\,ds \right)^{1/2}~~~
\mbox{(Bunyakovskii's inequality)} \\
 & = & \left\langle \left( \int_0^L M(s)\,ds \right) u , u \right\rangle^{1/2}
  \left\langle \left( \int_0^L M(s)\,ds \right) v , v \right\rangle^{1/2} \\
 & = & \left\langle \left( 2 - \int_0^L \lambda_{\min} ( s )\,ds \right) u , 
u \right\rangle^{1/2}
  \left\langle \left( 2 - \int_0^L \lambda_{\min} ( s)\,ds \right) v , 
v \right\rangle^{1/2}~~~
\mbox{(since $\int_0^L \mu ( \sigma (s),A )\,ds = 2I$)} \\
 & = & 2 + \delta .
\end{eqnarray*}
\end{proof}

\noindent
{\em Remark 1:}  Note that $\delta$ in (\ref{delta}) can be positive or negative (but it cannot
be less than $-2$).  It is $0$
if $\Omega = W(A)$, positive if $\Omega$ is a proper subset of $\mbox{int}(W(A))$, and negative 
if $\Omega$ is convex and $\mbox{cl}(W(A))$ is a proper subset of $\Omega$.
This follows from the fact shown in \cite{CP2017} that if $\tau$ lies on the tangent line to $W(A)$ 
at a point $\sigma \in \partial W(A)$, the infimum of the spectrum of the Hermitian part of
$( \sigma' (s) / ( \pi i ) ) R( \tau , A )$
is $0$, while on the side of this line that does not contain $W(A)$ it is positive and on the side
that does contain $W(A)$ it is negative.
\vspace{.1in}

\noindent
{\em Remark 2:}  The region $\Omega$ in Lemma \ref{lemma:1} need not be simply connected.  For example, it
could consist of a union of disks or other smooth regions, each of which encloses a part of the
spectrum.  
\vspace{.1in}

The remainder of \cite{CP2017} is aimed at relating $\| f(A) + g(A )^{*} \|$ to $\| f(A) \|$.
We note that in {\em many} numerical experiments in which $A$ is an $n$ by $n$ matrix
and $f$ is the function with $\| f \|_{W(A)} = 1$ that maximizes $\| \hat{f} (A) \|$
over all $\hat{f} \in {\cal A} (W(A))$ with $\| \hat{f} \|_{W(A)} = 1$ (or, at least,
our best attempt at computing this function via numerical optimization of roots of 
a Blaschke product), 
we have {\em always} found that $\| f(A) \| \leq \| f(A) + g(A )^{*} \|$.  If this could
be proved, then it would establish Crouzeix's conjecture.  Since we do not know a
proof, however, more work is needed to bound $\| f(A) \|$ in terms
of $\| f(A) + g(A )^{*} \|$.

We now assume that $\Omega$ is a bounded {\em convex} domain with smooth boundary.
It is shown in \cite{CP2017} that if $g(z)$ is defined in $\Omega$ by
\begin{equation}
g(z) = \frac{1}{2 \pi i} \int_{\partial \Omega} \frac{\overline{f( \sigma )}}{\sigma - z}\,d \sigma ,
\label{gofz}
\end{equation}
then $g \in {\cal A} ( \Omega )$ (when $g$ is extended continuously to $\partial \Omega$),
$g(A)$ satisfies (\ref{cauchytransform}), and 
\begin{equation}
\| g \|_{\Omega} \leq \| f \|_{\Omega} . \label{glef}
\end{equation}
It is further shown that $g( \partial \Omega ) := \{ g( \sigma ) :
\sigma \in \partial \Omega \}$ is contained in the convex hull of $\overline{f( \partial \Omega )} :=
\{ \overline{f( \sigma )} : \sigma \in \partial \Omega \}$.

For any bounded set $\Omega$ containing the spectrum of $A$ in its interior, with the spectrum
of $A$ bounded away from $\partial \Omega$, there is a minimal
constant $c_{\Omega} (A)$ (which, for convenience, we denote as simply $c_{\Omega}$)
such that for all $f \in {\cal A} ( \Omega )$,
\[
\| f(A) \| \leq c_{\Omega} \| f \|_{\Omega} .
\]
One such constant can be derived from the Cauchy integral formula:
\begin{equation}
\| f(A) \| \leq \frac{1}{2 \pi} \left( \int_{\partial \Omega} \| ( \sigma I - A )^{-1} \|~
| d \sigma | \right) \| f \|_{\Omega} , \label{Cauchybound}
\end{equation}
but this usually is not optimal.  It was shown in \cite{Delyon} that even if the spectrum
of $A$ is not bounded away from $\partial \Omega$, if $\Omega \supset W(A)$, then such a 
constant exists and is finite.
The following theorem uses Lemma \ref{lemma:1} and (\ref{glef})
to obtain a different upper bound on $c_{\Omega}$.

\begin{theorem}\footnote{The authors thank Felix Schwenninger and an anonymous referee
for an improvement to the bound obtained in the original version of this theorem.}
Let $\Omega$ be a convex domain with smooth boundary containing the spectrum of $A$
in its interior.  Then
\begin{equation}
c_{\Omega} \leq 1 + \frac{\delta}{2} + \sqrt{2 + \delta + \delta^2 / 4 + \hat{\gamma}} ,
\label{cOmega}
\end{equation}
where 
\begin{equation}
\delta = - \int_0^L \lambda_{\min} ( \mu ( \sigma (s) , A ))\,ds ,~~~
\hat{\gamma} = \int_0^L | \lambda_{\min} ( \mu ( \sigma (s) , A )) |\,ds .
\label{delta_hat}
\end{equation}
\label{theorem:1}
\end{theorem}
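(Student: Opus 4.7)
The plan is to mimic the Crouzeix--Palencia reduction of $\|f(A)\|$ to $\|f(A) + g(A)^{*}\|$, carefully tracking the two new contributions $\delta$ and $\hat{\gamma}$ introduced when $\Omega$ does not contain $W(A)$. The aim is the quadratic inequality
\[
F^{2} \;-\; (2 + \delta)\, F \;-\; (1 + \hat{\gamma}) \;\le\; 0, \qquad F := \|f(A)\|,
\]
which, on taking the positive root, is exactly the bound (\ref{cOmega}) after supremising over $f$ with $\|f\|_{\Omega} = 1$.

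Fix $f \in \mathcal{A}(\Omega)$ with $\|f\|_{\Omega} = 1$ and define $g$ by (\ref{gofz}); then $g \in \mathcal{A}(\Omega)$ with $\|g\|_{\Omega} \le 1$ by (\ref{glef}), and Lemma~\ref{lemma:1} gives $\|S\| \le 2 + \delta$ where $S = f(A) + g(A)^{*} + \gamma I$ and $|\gamma| \le \hat{\gamma}$. Choose unit vectors $u, v$ realising the largest singular value of $f(A)$, so that $f(A) v = F u$ and $f(A)^{*} u = F v$. Substituting $f(A) = S - g(A)^{*} - \gamma I$ into $F = \langle u, f(A) v\rangle$ and using $v = f(A)^{*} u / F$ to rewrite $\langle g(A) u, v\rangle$ as $\langle (fg)(A) u, u\rangle / F$ leads, after multiplying through by $F$, to the identity
\[
F^{2} \;=\; F\,\langle u, Sv\rangle \;-\; \langle (fg)(A) u, u\rangle \;-\; \gamma\,\langle f(A) u, u\rangle.
\]

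The main obstacle is to exploit this identity so that $\hat{\gamma}$ sits in the additive constant rather than in the linear coefficient of $F$. The first right-hand term is immediately controlled by $F\|S\| \le (2+\delta) F$. For $|\langle (fg)(A) u, u\rangle|$, I would apply (\ref{glef}) and the Cauchy--Schwarz argument of Lemma~\ref{lemma:1} to the product $fg \in \mathcal{A}(\Omega)$ (which satisfies $\|fg\|_{\Omega} \le 1$); a Crouzeix--Palencia-style cancellation then produces a bound of size at most $1$. The $\gamma$-contribution is the subtlest: the naive estimate $|\gamma\langle f(A) u, u\rangle| \le \hat{\gamma} F$ would place $\hat{\gamma}$ in the linear coefficient. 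To avoid this one uses the integral representation $S = \int_{0}^{L} M(s) f(\sigma(s))\, ds$ with $M(s) \ge 0$ and $\int_{0}^{L} M(s)\, ds = (2+\delta) I$ (which emerges from the proof of Lemma~\ref{lemma:1}), together with $\gamma = -\int_{0}^{L} \lambda_{\min}(\mu(\sigma(s), A))\, f(\sigma(s))\, ds$; a Bunyakovskii-type estimate applied to the whole pair $(u, v)$ simultaneously with the $\gamma$-integral yields $\hat{\gamma}$ as an additive constant. This is the refinement credited in the footnote to Schwenninger and the referee.

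Combining the three bounds gives $F^{2} \le (2 + \delta) F + (1 + \hat{\gamma})$; solving the quadratic yields $F \le 1 + \delta/2 + \sqrt{2 + \delta + \delta^{2}/4 + \hat{\gamma}}$, and supremising over $f \in \mathcal{A}(\Omega)$ with $\|f\|_{\Omega} \le 1$ establishes (\ref{cOmega}).
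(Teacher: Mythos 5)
Your target inequality $c_{\Omega}^{2} - (2+\delta)c_{\Omega} - (1+\hat{\gamma}) \le 0$ is the right one, and you correctly assemble the ingredients (Lemma \ref{lemma:1}, $\|g\|_{\Omega}\le 1$ from (\ref{glef}), $|\gamma|\le\hat{\gamma}$). But the two steps that actually have to produce the additive constant $1+\hat{\gamma}$ are asserted rather than proved, and the first of them is not true as stated. In your identity $F^{2} = F\langle u,Sv\rangle - \langle (fg)(A)u,u\rangle - \gamma\langle f(A)u,u\rangle$, you need $|\langle (fg)(A)u,u\rangle|\le 1$. No ``Crouzeix--Palencia-style cancellation'' gives this: the CP machinery bounds $\|h(A)+\tilde{h}(A)^{*}\|$, not the numerical radius of $h(A)$ alone, and the only available bound on $\|(fg)(A)\|$ for an analytic function of sup-norm $\le 1$ is $c_{\Omega}$ itself. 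Inserting that turns your quadratic into $c_{\Omega}^{2}\le(2+\delta)c_{\Omega}+c_{\Omega}+\cdots$, i.e.\ a linear bound $c_{\Omega}\le 3+\delta+\hat{\gamma}$, which is strictly weaker. Likewise, your proposed ``Bunyakovskii-type estimate applied to the whole pair $(u,v)$'' for the $\gamma$-term is not an argument; the naive bound $\hat{\gamma}F$ is what you actually have, and that puts $\hat{\gamma}$ in the linear coefficient, again giving a weaker result (essentially the pre-improvement version of the theorem alluded to in the footnote).

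The paper resolves both difficulties with a single structural move that your second-order identity cannot reproduce: it multiplies $f(A)^{*}=S^{*}-(g(A)+\bar{\gamma}I)$ by $f(A)^{*}f(A)$ on the left and $f(A)$ on the right, obtaining a \emph{fourth}-power relation
\[
\|f(A)\|^{4}\;\le\;c_{\Omega}^{3}\,\|S^{*}\|\;+\;c_{\Omega}\,\|h(A)\|,\qquad h:=f\,(g+\bar{\gamma})\,f .
\]
The point is that $h$ is a single element of ${\cal A}(\Omega)$ with $\|h\|_{\Omega}\le 1+\hat{\gamma}$, so $\|h(A)\|\le c_{\Omega}(1+\hat{\gamma})$; the extra factor of $c_{\Omega}$ incurred here is affordable only because the left-hand side carries $c_{\Omega}^{4}$. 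Dividing by $c_{\Omega}^{2}$ yields exactly $c_{\Omega}^{2}\le(2+\delta)c_{\Omega}+(1+\hat{\gamma})$. Note also that folding $\bar{\gamma}$ into $h$ (rather than estimating $\gamma\langle f(A)u,u\rangle$ separately) is precisely what moves $\hat{\gamma}$ from the linear coefficient into the constant term. Your write-up is missing this bootstrap, and without it the claimed bound does not follow.
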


\begin{proof}
Let $f \in {\cal A} ( \Omega )$ satisfy $\| f \|_{\Omega} = 1$.  From (\ref{S}), we can
write
\[
f(A )^{*} = S^{*} - (g(A) + \bar{\gamma} I ) .
\]
Multiply by $f(A )^{*} f(A)$ on the left and by $f(A)$ on the right to obtain
\[
[ f(A )^{*} f(A) ]^2 = f(A )^{*} f(A) S^{*} f(A) - f(A )^{*} f(A) ( g(A) + \bar{\gamma} I) f(A) .
\]
Now take norms on each side and use the fact that the norm of any function of $A$ is
less than or equal to $c_{\Omega}$ times the supremum of that function on $\Omega$ to find:
\begin{eqnarray*}
\| f(A) \|^4 & \leq & c_{\Omega}^3 \| S^{*} \| + c_{\Omega} \| h(A) \| ,~~~
h(z) := f(z)(g(z) + \bar{\gamma} ) f(z) , \\
 & \leq & c_{\Omega}^3 (2 + \delta ) + c_{\Omega}^2 ( 1 + \hat{\gamma} ) ,~~
\mbox{(since $\| S^{*} \| \leq 2 + \delta$ and $\|h \|_{\Omega} \leq 1 + \hat{\gamma}$).}
\end{eqnarray*}
Since this holds for all $f \in {\cal A} ( \Omega )$ with $\| f \|_{\Omega} = 1$, it
follows that
\[
c_{\Omega}^4 \leq c_{\Omega}^3 ( 2 + \delta ) + c_{\Omega}^2 ( 1 + \hat{\gamma} ),
\]
and solving the quadratic inequality $c_{\Omega}^2 - (2 + \delta ) c_{\Omega} -
(1 + \hat{\gamma} ) \leq 0$ for $c_{\Omega}$ gives the desired result (\ref{cOmega}).
\end{proof}

\section{A Perturbation Result}
How does $\lambda_{\min} ( \mu ( \sigma , A ) )$ vary as $\sigma$ moves inside or
outside $\partial W(A)$?  Is a region just slightly inside $W(A)$ a $K$-spectral set
for $A$ for some $K$ that is just slightly greater than $1 + \sqrt{2}$?

We now assume that $A$ is an $n$ by $n$ matrix and that no eigenvalue of $A$ lies
on $\partial W(A)$.
Consider a curve consisting of points $\tilde{\sigma} ( \tilde{s} )$,
where $\tilde{s}$ runs from $0$ to $\tilde{L} < L$ and $\tilde{\sigma}' ( \tilde{s} ) = \sigma' (s)$,
for $\sigma (s)$ on $\partial W(A)$ close to $\tilde{\sigma}( \tilde{s} )$.
For example, we might take a ``center'' point of $W(A)$ as the origin and for $\theta \in [0 , 2 \pi )$ write
$\sigma ( \theta ) = r( \theta ) e^{i \theta}$, $\tilde{\sigma} ( \theta ) = 
(1 - \epsilon ) r( \theta ) e^{i \theta }$, where $r( \theta )$ is the distance from the center
point to the point on $\partial W(A)$ with argument $\theta$, and $\epsilon > 0$ is small.  
Then $\left. \frac{d \tilde{\sigma}}{d \tilde{s}} \right|_{\tilde{s}} = 
\left. \frac{d \sigma}{ds} \right|_{s}$ if $\tilde{s} = (1 - \epsilon ) s$.

From the Wielandt-Hoffman theorem \cite{HW1953}, 
\begin{equation}
| \lambda_{\min} ( \mu ( \sigma , A )) - \lambda_{\min} ( \mu ( \tilde{\sigma} , A ) ) |
\leq \| \mu ( \sigma , A ) - \mu ( \tilde{\sigma} , A ) \| , \label{WielandtHoffman}
\end{equation}
and the same inequality holds for the difference between every pair of ordered eigenvalues
of $\mu ( \sigma , A )$ and $\mu ( \tilde{\sigma} , A )$.
The matrix on the right is the Hermitian part of
\[
\frac{\sigma' (s)}{\pi i} \left[ R( \sigma , A ) - R( \tilde{\sigma} , A ) \right] ,
\]
and from the first resolvent identity this is equal to the Hermitian part of
\[
( \tilde{\sigma} - \sigma ) \left[ \frac{\sigma' (s)}{\pi i} R( \sigma , A ) R( \tilde{\sigma} , A )
\right] .
\]
Therefore since, from \cite{CP2017}, $\lambda_{\min} ( \mu ( \sigma , A ) ) = 0$, it follows from 
(\ref{WielandtHoffman}) that
\[
\lambda_{\min} ( \mu ( \tilde{\sigma} , A )) \geq 
- | \tilde{\sigma} - \sigma | \cdot
\left\| \left[ \frac{\sigma' (s)}{2 \pi i} R( \sigma , A ) R( \tilde{\sigma} , A )
\right] + \left[ \frac{\sigma' (s)}{2 \pi i} R( \sigma , A ) R( \tilde{\sigma} , A ) \right]^{*}
\right \| .
\]
As $\epsilon \rightarrow 0$, this shows that the smallest eigenvalue of $\mu ( \tilde{\sigma} , A )$
is greater than or equal to $-C \epsilon$ for a positive constant $C$, and $\delta$ and 
$\hat{\gamma}$ in (\ref{delta_hat}) are therefore $O( \epsilon )$, so that $c_{\Omega} \leq 1 + \sqrt{2} + O( \epsilon )$.

\section{Numerical Studies}
To further study the behavior of $\lambda_{\min} ( \mu ( \sigma , A ) )$ for $\sigma$
inside or outside $W(A)$, one can make a plot of $\lambda_{\min}$ vs. arc length $s$ or angle 
$\theta (s)$, for $\sigma (s)$ on various curves.  [Note that the value of $\lambda_{\min}$
depends not only on the location of $\sigma$ but also on the curve on which $\sigma$ is 
considered to lie.]  Instead of taking curves on
which $\sigma'$ matches the derivative at some associated point on $\partial W(A)$, we will now
take $\sigma$ to lie on a circle.   We first determine the center $c$ and radius $r$
of the smallest circle enclosing the spectrum of $A$ and then determine values of $\lambda_{\min} 
( \mu ( \sigma , A ) )$ at points $\sigma$ on circles about $c$ of radius $R > r$.  On such circles,
$\sigma = c + R e^{i \theta}$, $0 \leq \theta < 2 \pi$, and $s = R \theta$ so that
\[
\sigma (s) = c + R e^{i s/R} ,~~~\sigma' (s) = i e^{i s/R} .
\]

Figure \ref{fig:1} shows the eigenvalues (x's) and numerical range (solid curve) of
a random complex upper triangular matrix $A$ of dimension $n=12$.  It also shows the 
circles on which we computed $\lambda_{\min} ( \mu ( \sigma , A ))$ (dashed circles).
Figure \ref{fig:2} shows the values of $\lambda_{\min} ( \mu ( \sigma (s) , A ))$ plotted
vs. arc length $s$ on each of these circles, where the bottom curve corresponds 
to the innermost circle and the curves move up as the circles become larger.  From the figure, 
it can be seen that $\lambda_{\min}$ decreases rapidly as $\sigma$
moves inside $W(A)$ towards the spectrum, but it grows very slowly as $\sigma$ moves outside
$W(A)$.

\begin{figure}[ht]
\centerline{\epsfig{file=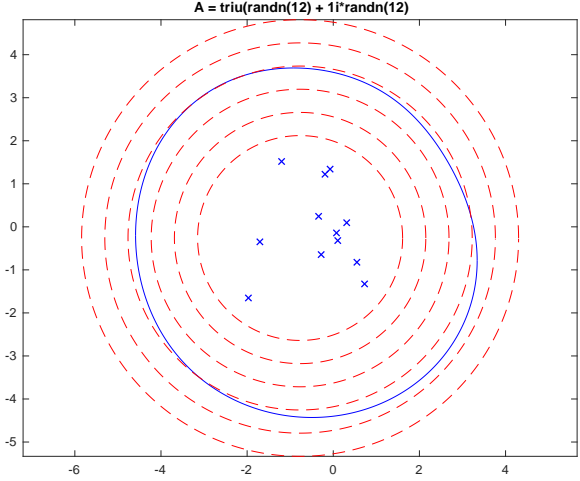,width=3in}}
\caption{Eigenvalues (x's) and numerical range (solid curve) of a random complex
upper triangular matrix $A$ of dimension $n=12$.  The dashed circles are the ones
on which $\lambda_{\min} ( \mu ( \sigma , A ))$ was computed.} \label{fig:1}
\end{figure}

\begin{figure}[ht]
\centerline{\epsfig{file=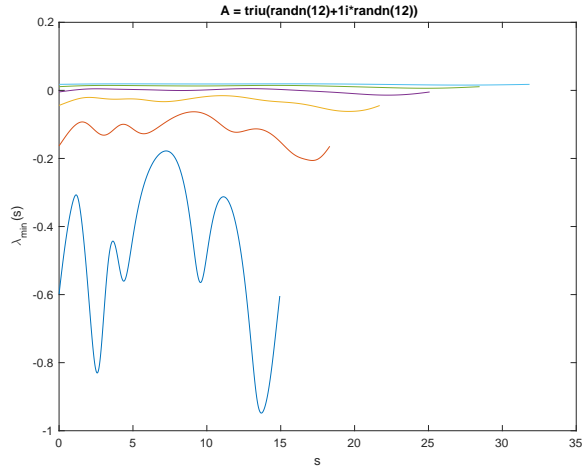,width=3in}}
\caption{Plot of $\lambda_{\min} ( \mu ( \sigma (s) , A ))$ vs. arc length $s$ on each
of the dashed circles in Figure \ref{fig:1}.  The bottom curve corresponds to the 
innermost circle and the curves move up as the circles become larger.} \label{fig:2}
\end{figure}

Figures \ref{fig:3} and \ref{fig:4} show the same results for a $3$ by $3$ perturbed
Jordan block:
\begin{equation}
A = \left[ \begin{array}{ccc} 0 & 1 & 0 \\ 0 & 0 & 1 \\ 0.1 & 0 & 0 \end{array} \right] .
\label{Jordan}
\end{equation} 
Again one can see that $\lambda_{\min} ( \mu ( \sigma , A ))$
decreases rapidly as $\sigma$ moves toward the spectrum but grows slowly as it moves outside $W(A)$.

\begin{figure}[ht]
\centerline{\epsfig{file=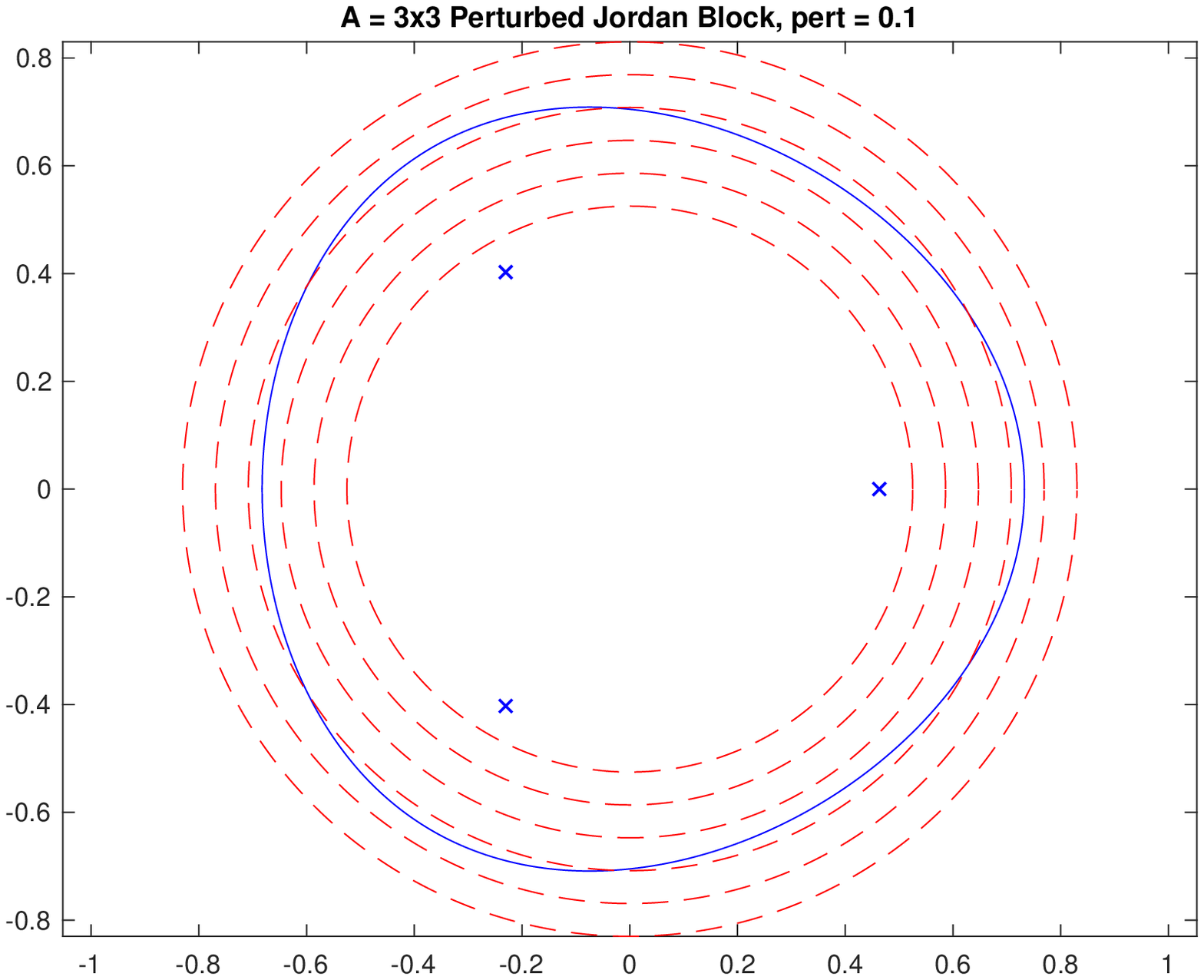,width=3in}}
\caption{Eigenvalues (x's) and numerical range (solid curve) of the $3$ by $3$ perturbed
Jordan block (\ref{Jordan}).  The dashed circles are the ones
on which $\lambda_{\min} ( \mu ( \sigma , A ))$ was computed.} \label{fig:3}
\end{figure}

\begin{figure}[ht]
\centerline{\epsfig{file=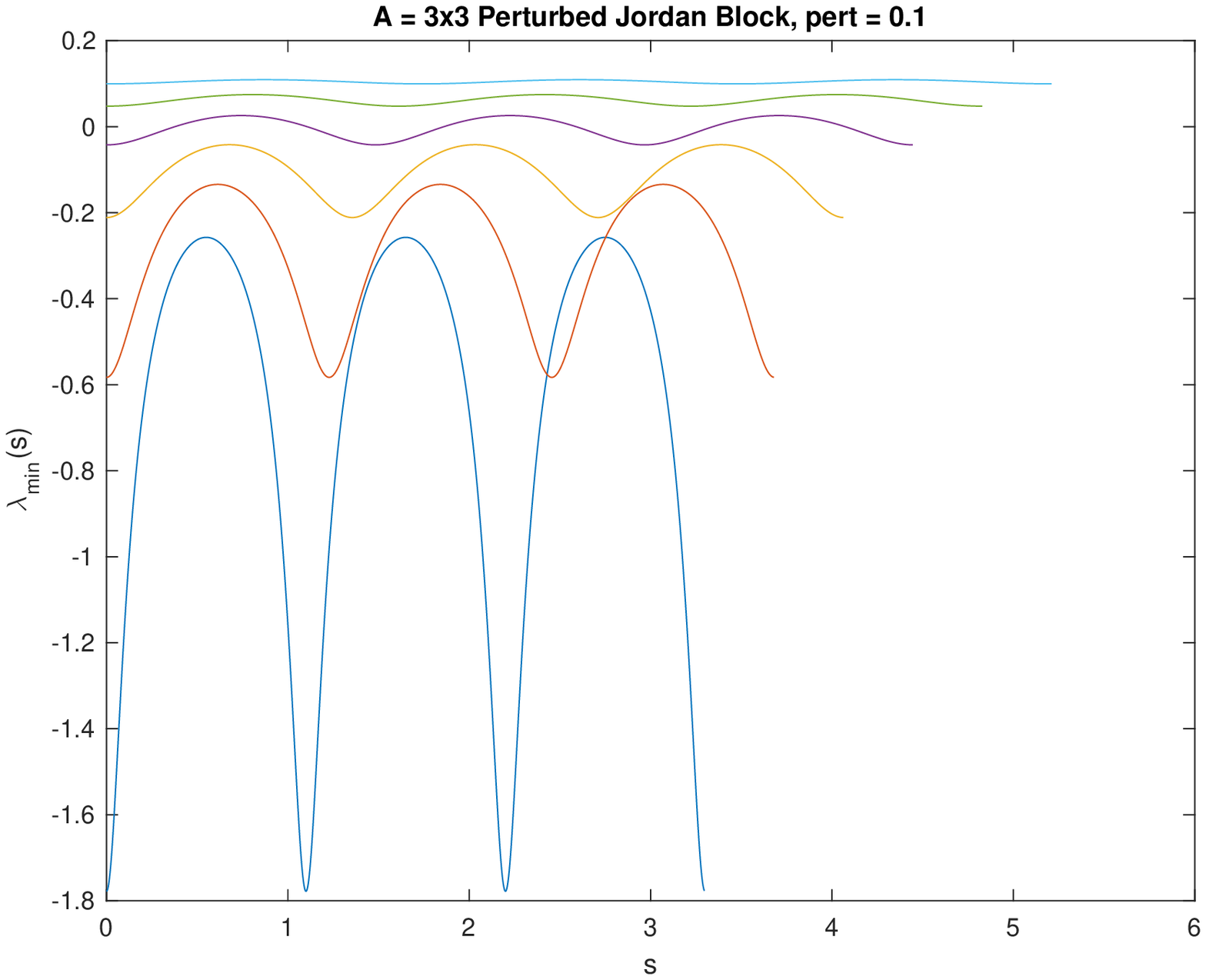,width=3in}}
\caption{Plot of $\lambda_{\min} ( \mu ( \sigma (s) , A ))$ vs. arc length $s$ on each
of the dashed circles in Figure \ref{fig:3}.  The bottom curve corresponds to the 
innermost circle and the curves move up as the circles become larger.} \label{fig:4}
\end{figure}

Table \ref{table:1} shows the values of $\delta$ and $\hat{\gamma}$ in (\ref{delta_hat})
and the upper bound on
$c_{\Omega}$ in (\ref{cOmega}) (labeled $K_{\delta}$) for each
of the disks (starting with the smallest) in both problems.  For comparison,
we also include the upper bound on $c_{\Omega}$ based on the Cauchy integral
formula and the resolvent norm in (\ref{Cauchybound}) (labeled $K_{\mbox{Cauchy}}$).
In all cases, $K_{\delta} < K_{\mbox{Cauchy}}$.

\begin{table}[ht]
\begin{center}
\begin{tabular}{|r|r|r|r|r|r|r|r|} \hline
\multicolumn{4}{|c|}{Random} & \multicolumn{4}{c|}{Perturbed} \\ 
\multicolumn{4}{|c|}{Upper Triangular} & \multicolumn{4}{c|}{Jordan Block} \\ \hline
\multicolumn{1}{|c|}{$\delta$} & \multicolumn{1}{|c|}{$\hat{\gamma}$} & \multicolumn{1}{c|}{$K_{\delta}$} &
\multicolumn{1}{c|}{$K_{\mbox{Cauchy}}$} &
\multicolumn{1}{|c|}{$\delta$} & \multicolumn{1}{|c|}{$\hat{\gamma}$} & \multicolumn{1}{c|}{$K_{\delta}$} &
\multicolumn{1}{c|}{$K_{\mbox{Cauchy}}$}  \\ \hline \hline
7.0485 & 7.0485 & 9.865 & 12.160 & 2.2234 & 2.2234 & 4.884 & 5.635 \\ \hline
2.2291 & 2.2291 & 4.890 & 6.422 & 1.0969  & 1.0969 & 3.669 & 4.416 \\ \hline
0.7209 & 0.7209 & 3.251 & 4.445 & 0.4557  & 0.4557 & 2.950 & 3.741 \\ \hline
0.0377 & 0.1179 & 2.488 & 3.479 & 0.0201  & 0.0946 & 2.465 & 3.291 \\ \hline
-0.3388 & 0.3388     & 2.255 & 2.918 & -0.2999 & 0.2999      & 2.273 & 2.965 \\ \hline
-0.5767 & 0.5767     & 2.155 & 2.555 & -0.5456 & 0.5456       & 2.168 & 2.716 \\ \hline
\end{tabular}
\end{center}
\caption{Values of $\delta$ and $\hat{\gamma}$ in (\ref{delta_hat})
and upper bound ($K_{\delta}$) on $c_{\Omega}$ in 
(\ref{cOmega}) and upper bound ($K_{Cauchy}$) on $c_{\Omega}$ in (\ref{Cauchybound})
for disks in Figures \ref{fig:1} and \ref{fig:3}.} \label{table:1}
\end{table}

\section{Optimal Blaschke Products}
From here on we always assume that $A$ is an $n$ by $n$ matrix.  Then
if $\Omega$ is any simply connected proper open subset of $\mathbb{C}$ containing the 
spectrum of $A$, then there is a function $f$ such that $\| f \|_{\Omega} = 1$
and $\| f (A) \| = c_{\Omega}$.  This function $f$ is of the form $B \circ \varphi$,
where $\varphi$ is any bijective conformal mapping from $\Omega$ to the unit disk
${\cal D}$, and $B$ is a Blaschke product of the form (\ref{Blaschke}).

While we do not know an analytic formula for the roots $\alpha_j$, $j=1, \ldots , n-1$,
of this optimal Blaschke product, the following theorem describes one property of the
optimal $B$:

\begin{theorem}
Let $\Psi$ be an $n$ by $n$ matrix whose spectrum is inside the unit disk ${\cal D}$ 
and let $B$ be a Blaschke product of degree at most $n-1$ that maximizes 
$\| \hat{B} ( \Psi ) \|$ over all Blaschke products $\hat{B}$ of degree at most $n-1$.  Assume
that $\| B( \Psi ) \| > 1$.  Then, if $v_1$ is a right singular vector of $B( \Psi )$
corresponding to the largest singular value $\sigma_1$, then
\begin{equation}
\langle B( \Psi ) v_1 , v_1 \rangle = 0 . \label{orthogonal}
\end{equation}
\label{theorem:2}
\end{theorem}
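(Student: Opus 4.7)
My plan is a first-order variational argument combined with an adjoint symmetry between the pairs $(B,\Psi)$ and $(\bar B,\Psi^{*})$.  The admissible perturbations I will use are post-compositions with disc automorphisms: for $t\in\mathbb{C}$ and $\epsilon>0$ small, the Möbius map $m_{\epsilon t}(w):=(w+\epsilon t)/(1+\epsilon\bar t w)$ is a conformal automorphism of $\mathcal{D}$, and a short counting argument shows that $B_{\epsilon}:=m_{\epsilon t}\circ B$ is again a Blaschke product of degree at most $n-1$.  Writing $A:=B(\Psi)$, one computes $B_{\epsilon}(\Psi)=(A+\epsilon tI)(I+\epsilon\bar t A)^{-1}$, whose derivative at $\epsilon=0$ is $tI-\bar t A^{2}$.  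Assuming $\sigma_{1}$ is simple (the degenerate case is handled by a subdifferential argument), the envelope theorem together with optimality of $B$ gives $\operatorname{Re}\langle Av_{1},(tI-\bar t A^{2})v_{1}\rangle=0$ for every $t\in\mathbb{C}$.  Using $Av_{1}=\sigma_{1}u_{1}$ and $A^{2}v_{1}=\sigma_{1}Au_{1}$ and letting $t$ range over $\mathbb{C}$, this reduces to the single complex identity
\[
   \langle u_{1},v_{1}\rangle \;=\; \sigma_{1}\,\langle Au_{1},u_{1}\rangle, \qquad (\star)
\]
expressing the first-order stationarity of $B$ under post-composition with disc automorphisms.

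Next I would apply $(\star)$ to a second optimal pair.  Setting $\bar B(z):=\overline{B(\bar z)}$, a direct calculation shows that $\bar B$ is a Blaschke product of degree at most $n-1$ with $\bar B(\Psi^{*})=B(\Psi)^{*}=A^{*}$.  Because the map $\hat B\mapsto\overline{\hat B}$ is a bijection on the class of admissible Blaschke products satisfying $\|\hat B(\Psi^{*})\|=\|\overline{\hat B}(\Psi)\|$, the product $\bar B$ is itself a maximizer of $\|\hat B(\Psi^{*})\|$.  The top right and top left singular vectors of $A^{*}$ are $u_{1}$ and $v_{1}$ respectively, so $(\star)$ applied to the pair $(\bar B,\Psi^{*})$ gives $\langle v_{1},u_{1}\rangle=\sigma_{1}\,\langle A^{*}v_{1},v_{1}\rangle$.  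Writing $\beta:=\langle Av_{1},v_{1}\rangle$, the relation $Av_{1}=\sigma_{1}u_{1}$ yields $\beta=\sigma_{1}\langle u_{1},v_{1}\rangle$, so $\langle v_{1},u_{1}\rangle=\bar\beta/\sigma_{1}$; the adjoint identity yields $\langle A^{*}v_{1},v_{1}\rangle=\overline{\langle Av_{1},v_{1}\rangle}=\bar\beta$.  Equating the two expressions gives $\bar\beta/\sigma_{1}=\sigma_{1}\bar\beta$, i.e.\ $(1-\sigma_{1}^{2})\bar\beta=0$; since $\sigma_{1}>1$ by hypothesis, this forces $\beta=0$, which is (\ref{orthogonal}).

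The main obstacle is the variational bookkeeping: one must verify carefully that post-composition with a disc automorphism preserves both the Blaschke property and the degree bound, confirm the operator identity $\bar B(\Psi^{*})=B(\Psi)^{*}$, and track which vectors play the role of top right and top left singular vectors for $A^{*}$.  The envelope-theorem step presumes that $\sigma_{1}$ is simple; in the non-simple case one invokes Danskin's theorem applied to the largest eigenvalue of $B_{\epsilon}(\Psi)^{*}B_{\epsilon}(\Psi)$, working with any fixed right singular vector $v_{1}$ in the top eigenspace, and the argument goes through unchanged.
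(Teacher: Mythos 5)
Your proposal is correct, and it uses the same family of admissible perturbations as the paper --- post-composition of $B$ with disc automorphisms $w \mapsto (w+\alpha)/(1+\bar{\alpha}w)$ --- but it extracts the conclusion in a genuinely different way. The paper never differentiates the operator norm: writing $M = B(\Psi)$, it simply tests the optimality inequality $\| (M-\alpha I)(I-\bar{\alpha}M)^{-1} \| \leq \sigma_1$ on the single $\alpha$-dependent vector $w = (I-\bar{\alpha}M)v_1$, giving $\| (M-\alpha I)v_1 \| \leq \sigma_1 \| (I-\bar{\alpha}M)v_1 \|$; squaring, the first-order terms in $\alpha$ on the two sides are $-2\,\mbox{Re}(\bar{\alpha}\langle Mv_1,v_1\rangle)$ and $-2\sigma_1^2\,\mbox{Re}(\bar{\alpha}\langle Mv_1,v_1\rangle)$, and the coefficient mismatch yields $2(\sigma_1^2-1)\mbox{Re}(\bar{\alpha}\langle Mv_1,v_1\rangle) \leq |\alpha|^2(\sigma_1^4-1)$, from which $\langle Mv_1,v_1\rangle = 0$ follows at once --- with no simplicity assumption on $\sigma_1$ and no appeal to the adjoint. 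Your route differentiates the true largest singular value, which inevitably drags in the second unknown $\langle Au_1,u_1\rangle$ through the term $\bar{t}A^2$, so one stationarity condition $(\star)$ cannot close the argument and you must manufacture a second equation via the symmetry $\bar{B}(\Psi^{*}) = B(\Psi)^{*}$; combining the two identities then kills $\langle Av_1,v_1\rangle$ exactly because $\sigma_1 \neq 1$. I checked the pieces: the degree count under post-composition, the identity $\bar{B}(\Psi^{*})=B(\Psi)^{*}$, the roles of $u_1$ and $v_1$ for $A^{*}$, and the algebra leading to $(1-\sigma_1^2)\bar{\beta}=0$ are all right, and your Danskin fallback for a degenerate $\sigma_1$ does go through since the one-sided directional derivative is a maximum over the top eigenspace of a quantity that is real-linear in $t$, forcing it to vanish for every vector in that eigenspace. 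What your approach buys is the extra identity $\langle Au_1,u_1\rangle = 0$ as a by-product (though that also follows by applying the theorem to $\Psi^{*}$); what it costs is the envelope-theorem and multiplicity bookkeeping that the paper's pointwise inequality sidesteps entirely.
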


\begin{proof}\footnote{The authors thank M.~Crouzeix for the nice proof of this theorem,
after we had observed the result in numerical experiments.}
Let $M = B( \Psi )$ where $B$ is the Blaschke product of the form (\ref{Blaschke}) for
which $\| B ( \Psi ) \|$ is maximal.  Then no matrix of the form
\[
( M - \alpha I ) ( I - \bar{\alpha} M )^{-1} ,~~~| \alpha | < 1 ,
\]
can have larger norm than $M$ since this is also a Blaschke product in $\Psi$.
Let $v_1$ be a unit right singular vector of $M$ corresponding to the largest singular
value $\sigma_1$, and define $w = (I - \bar{\alpha} M ) v_1$.  Then
\[
\| (M - \alpha I ) v_1 \| = \| (M - \alpha I ) ( I - \bar{\alpha} M )^{-1} w \| \leq
\sigma_1 \| w \| = \sigma_1 \| (I - \bar{\alpha} M ) v_1 \| .
\]
Squaring both sides, this becomes
\[
\langle M v_1 , M v_1 \rangle - 2 \mbox{Re}( \bar{\alpha} \langle M v_1 , v_1 \rangle ) +
| \alpha |^2 \leq \sigma_1^2 [ 1 - 2 \mbox{Re}( \bar{\alpha} \langle M v_1 , v_1 \rangle ) +
| \alpha |^2 \langle M v_1 , M v_1 \rangle ],
\]
and since $\langle M v_1 , M v_1 \rangle = \sigma_1^2$,
\[
2 ( \sigma_1^2 - 1) \mbox{Re} ( \bar{\alpha} \langle M v_1 , v_1 \rangle ) \leq | \alpha |^2
( \sigma_1^4 - 1 ) .
\]
With the assumption that $\sigma_1 > 1$, dividing by $\sigma_1^2 - 1$ this becomes
\[
2 \mbox{Re} ( \bar{\alpha} \langle M v_1 , v_1 \rangle ) \leq | \alpha |^2 ( \sigma_1^2 + 1 ) .
\]
Choosing $\alpha$ so that $\bar{\alpha} \langle M v_1 , v_1 \rangle = | \alpha |~
| \langle M v_1 , v_1 \rangle |$, we have
\[
2 | \alpha |~ | \langle M v_1 , v_1 \rangle | \leq | \alpha |^2 ( \sigma_1^2 + 1 ) ,
\]
and letting $| \alpha | \rightarrow 0$, this implies that $| \langle M v_1 , v_1 \rangle | = 0$.
\end{proof}
\section{Some Cases in Which $1 + \sqrt{2}$ can be Replaced by $2$}

It was noted after Lemma \ref{lemma:1} that in all numerical experiments that we have performed --
determining $f = B \circ \varphi$ by first finding a conformal mapping $\varphi$
from $W(A)$ to ${\cal D}$ and then using an optimization code with many different 
initial guesses to try to find the roots $\alpha_1 , \ldots , \alpha_{n-1}$ in (\ref{Blaschke}) 
that maximize $\| f(A) \| = \| B( \varphi (A) ) \|$, and then finding the matrix $g(A)$ in
(\ref{cauchytransform}) that corresponds to this $f$ -- it has always been the case
that $\| f(A) \| \leq \| f(A) + g(A )^{*} \|$.  We now describe some cases in which 
that can be proved and also cases in which it can be shown that for other regions 
$\Omega$, $\| f(A) \| \leq \| S \|$ in Lemma \ref{lemma:1}.

\subsection{$\Omega$ a Disk}

If $\Omega$ is a closed disk with center $c$, the formula (\ref{gofz}) for $g(z)$ can be 
evaluated very simply:  For $z$ inside $\Omega$, $g(z) \equiv \overline{f(c)}$ 
\cite[p.~205; e.g.]{Remmert}.
Thus $g(A) = \overline{f(c)} I$.  If $f = B \circ \varphi$ is a 
function that maximizes $\| f(A) \|$ over all functions with $\| f \|_{\Omega} = 1$,
if $\| f(A) \| > 1$,
and if $v_1$ is a unit right singular vector of $f(A)$ corresponding to the largest 
singular value and $u_1 = f(A) v_1 / \| f(A) v_1 \|$ 
is the corresponding unit left singular vector, then
\[
u_1^* ( f(A) + g(A )^{*} + \gamma I ) v_1 = u_1^* f(A) v_1 = \| f(A) \| ,
\]
since, by Theorem \ref{theorem:2}, $u_1^{*} v_1 = 0$.  It follows that $\| f(A) \|$
is less than or equal to $\max \{ 1,~\| S \| \}$ in (\ref{S}).  
This provides a new proof of the statement:  
\begin{equation}
{\mbox
{\em If $W(A)$ is contained in a closed disk $\Omega$, then $\Omega$ is a 2-spectral set for $A$},
} \label{statement}
\end{equation}
since in this case $\delta$ in (\ref{delta}) is less than or equal to $0$.  [A priori,
the assumption that the spectrum of $A$ is contained in the interior of $W(A)$ is
needed, but this can be easily avoided.]  Note also that $c_{\Omega} < 2$ if $W(A)
\neq \Omega$ since in this case $\delta$ is negative.
The proof of (\ref{statement}) in \cite{Crouzeix1} was based on a paper of 
Ando \cite{Ando}, and a similar construction had been carried out by Okubo and 
Ando \cite{OkuboAndo} in a more general setting.  Our proof is simpler, but the
previous proofs showed, in addition, that $\Omega$ is a {\em complete} 2-spectral set for $A$.

Even if $\Omega$ does not contain all of $W(A)$, the estimate 
$\| f(A) \| \leq \max \{ 1,~2 + \delta \}$ holds.
This provides a better upper bound
on $c_{\Omega}$ in the experiments of Section 4.  For comparison, Table \ref{table:2}
lists this upper bound on $c_{\Omega}$ and also the largest value returned by our
optimization code, which we believe to be the true value of $c_{\Omega}$ but, at least, it
is a lower bound.  In some cases, these are quite close.

\begin{table}[ht]
\begin{center}
\begin{tabular}{|r|r|r|r|} \hline
\multicolumn{2}{|c|}{Random} & \multicolumn{2}{c|}{Perturbed} \\ 
\multicolumn{2}{|c|}{Upper Triangular} & \multicolumn{2}{c|}{Jordan Block} \\ \hline
\multicolumn{1}{|c|}{$2+ \delta$} & \multicolumn{1}{c|}{$\| B( \varphi (A) ) \|$} &
\multicolumn{1}{|c|}{$2+ \delta$} & \multicolumn{1}{c|}{$\| B( \varphi (A) ) \|$} 
\\ \hline \hline
9.049 & 4.400  & 4.224 & 3.625  \\ \hline
4.230 & 2.584  & 3.097 & 2.910  \\ \hline
2.721 & 2.058  & 2.456 & 2.387  \\ \hline
2.038 & 1.752  & 2.021 & 1.993  \\ \hline
1.662 & 1.538  & 1.701 & 1.690  \\ \hline
1.424 & 1.372  & 1.455 & 1.451  \\ \hline
\end{tabular}
\end{center}
\caption{Upper bound $2 + \delta$ on $c_{\Omega}$ and lower bound $\| B( \varphi (A)) \|$
found by numerical optimization of $B$ for disks in Figures \ref{fig:1} and \ref{fig:3}.} \label{table:2}
\end{table}

\subsubsection{A Different Bound on $c_{W(A)}$}
Since we know that a disk containing the spectrum of a matrix $\Psi$ is a 
$\max \{ 1, 2 + \delta \}$-spectral set for $\Psi$, we can use this to obtain (numerically)
a different bound on $c_{W(A)}$.  Let $\varphi$ be a bijective conformal mapping
from $W(A)$ to the unit disk ${\cal D}$.  Then ${\cal D}$ is a $K$-spectral set
for $\varphi (A)$, where $K = \max \{ 1, 2 + \delta_{\varphi (A)} \}$, and 
\[
\delta_{\varphi (A)} = - \int_0^{2 \pi} \lambda_{\min} ( \mu ( \sigma (s) , \varphi (A) ) )\,ds , 
\]
where $\sigma (s) = e^{is}$.  It follows that $W(A)$ is a $K$-spectral set for $A$,
with the same value of $K$, since for any $f \in {\cal A} (W(A))$,
\[
\| f(A) \| = \| f \circ \varphi^{-1} ( \varphi (A) ) \| \leq 
K \| f \circ \varphi^{-1} \|_{\cal D} = K \| f \|_{W(A)} .
\]

Using this result, one can determine numerically better bounds on $c_{W(A)}$ for the problems
considered in Section 4.  For the $12$ by $12$ random upper triangular matrix,
$\delta_{\varphi (A)} = 0.0113$, so $W(A)$ is a $2.0113$ spectral set for $A$.
For the $3$ by $3$ perturbed Jordan block, $\delta_{\varphi (A)} = -0.0013$, so
$W(A)$ is a $1.9987$-spectral set for $A$.  It is an open question whether such
bounds can be determined theoretically, and the numerical result for the first problem
suggests that this will not be a way to prove Crouzeix's conjecture.
  
\subsection{Matrices for which Crouzeix's Conjecture has been Proved}
Besides matrices whose numerical range is a circular disk, Crouzeix's conjecture has been 
proved to hold for a number of other classes of
matrices -- e.g., $2$ by $2$ matrices \cite{Crouzeix1}, matrices of the form 
$a I + DP$ or $a I + PD$, where $a \in \mathbb{C}$,
$D$ is a diagonal matrix, and $P$ is a permutation matrix \cite{Choi,GKL}; $3$ by $3$
tridiagonal matrices with elliptic numerical range centered at an eigenvalue
\cite{GKL}; etc.  In all of these cases, while the conformal mapping $\varphi$
may be a complicated function, $\varphi (A)$ is just a linear function of $A$:
$\varphi (A) = \alpha A + \beta I$.  From our experiments, it appears that in
all of these cases the function $g$ corresponding to the optimal $f$ has the form
\[
g(A )^{*} = c_0 I + c_1 ( f(A )^{*} )^{-1} .
\]
Then (assuming $\| f(A) \| > 1$),
\[
| u_1^{*} ( f(A) + g(A )^{*} + \gamma I ) v_1 | = \left| \| f(A) \| + \frac{c_1}{\| f(A) \|} 
\right| .
\]
If $\mbox{Re}( c_1 ) \geq | c_1 |^2 / ( 2 \| f(A) \|^2 )$, then this is greater than or 
equal to $\| f(A) \|$, hence $\| f(A) \| \leq \| S \|$.

\section{A Case in Which the Bound is Sharp}
It was shown in the previous section that if $\Omega$ is a disk containing the
spectrum of $A$ in its interior, then $\max_{f \in {\cal A} ( \Omega )} 
\| f(A) \| / \| f \|_{\Omega} \leq \max \{ 1,~2 + \delta \}$, where $\delta$ is defined in (\ref{delta}).
It turns out that if $A$ is a $3$ by $3$ Jordan block, then this bound is sharp for
all disks of radius less than or equal to $1$ centered at the eigenvalue of $A$, 
which, for convenience, we will take to be $0$.

\begin{theorem}
Let $A$ be a $3$ by $3$ Jordan block with eigenvalue $0$ and let $\Omega$ be any
disk about the origin with radius $r \leq 1$.  Then
\begin{equation}
\max_{f \in {\cal A}( \Omega )} \frac{\| f(A) \|}{\| f \|_{\Omega}} = 2 + \delta ,
\label{equality}
\end{equation}
where $\delta$ is defined in (\ref{delta}).
\end{theorem}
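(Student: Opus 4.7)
The plan is to compute $\delta$ explicitly and then exhibit a function that attains the bound $c_\Omega \leq \max\{1,\,2+\delta\}$ already established in Section 6.1. Parametrize $\partial\Omega$ by $\sigma(s) = re^{i\theta}$ with $\theta = s/r$, so $\sigma'(s) = ie^{i\theta}$, and use the closed form $R(\sigma, A) = \sigma^{-1}I + \sigma^{-2}A + \sigma^{-3}A^2$ for the nilpotent Jordan block. A direct substitution into (\ref{mu}) shows that
\begin{equation*}
\mu(\sigma(s), A) = \frac{1}{2\pi r}\,D(\theta)\,T_r\,D(\theta)^{*},\qquad D(\theta) = \mbox{diag}(1, e^{i\theta}, e^{2i\theta}),
\end{equation*}
where $T_r$ is the real symmetric matrix with $(T_r)_{jj}=2$ and $(T_r)_{jk} = r^{-|j-k|}$ for $j\neq k$. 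Since $D(\theta)$ is unitary, $\lambda_{\min}(\mu(\sigma(s),A)) = \lambda_{\min}(T_r)/(2\pi r)$ is constant in $s$, and (\ref{delta}) reduces to $\delta = -\lambda_{\min}(T_r)$.

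The second step is to identify $\lambda_{\min}(T_r)$. A quick check shows that $(1,0,-1)^T/\sqrt{2}$ is an eigenvector of $T_r$ with eigenvalue $2 - 1/r^2$. Restricting $T_r$ to the orthogonal complement, spanned by $(1,0,1)^T/\sqrt{2}$ and $(0,1,0)^T$, produces a $2\times 2$ symmetric matrix whose smaller eigenvalue can be written in closed form, and a short inequality shows it exceeds $2 - 1/r^2$ precisely when $r \leq 1$. Hence $\lambda_{\min}(T_r) = 2 - 1/r^2$ for $r \leq 1$, giving $\delta = 1/r^2 - 2$ and $2+\delta = 1/r^2 \geq 1$. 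Combined with Section 6.1, this yields the upper bound $c_\Omega \leq 1/r^2$.

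For the matching lower bound, I would test $f(z) = z^2/r^2$. Then $\|f\|_\Omega = 1$, while $f(A) = A^2/r^2$ equals $r^{-2}\,e_1 e_3^T$ and therefore has norm $1/r^2$. Hence $c_\Omega \geq 1/r^2 = 2+\delta$, which combined with the previous paragraph establishes the equality (\ref{equality}).

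The main obstacle is the eigenvalue comparison in the second step: one must verify that $2 - 1/r^2$ really is the smallest eigenvalue of $T_r$, and this is exactly where the hypothesis $r \leq 1$ enters the argument. Everything else amounts to routine bookkeeping that exploits the nilpotency $A^3 = 0$.
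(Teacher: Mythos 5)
Your proof is correct, and the computational core---writing out the resolvent via $A^3=0$, observing that $\mu(\sigma(s),A)$ is a constant matrix conjugated by a diagonal unitary so that $\lambda_{\min}$ is independent of $s$, and arriving at $\delta = 1/r^2-2$---is the same as the paper's, though you supply the eigenvalue verification (the $(1,0,-1)^T$ eigenvector and the comparison on the orthogonal complement, where $r\le 1$ enters) that the paper dismisses with ``it is easy to check.'' Where you genuinely diverge is in the treatment of the left-hand side of (\ref{equality}): the paper asserts that the optimal Blaschke product for $\varphi(A)=A/r$ is $B(z)=z^2$ and reads off the maximum as $1/r^2$ directly, whereas you sandwich $c_\Omega$ between the upper bound $\max\{1,\,2+\delta\}$ already proved in Section 6.1 for disks and the lower bound obtained from the single test function $f(z)=z^2/r^2$. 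Your route buys something: it avoids having to justify that $z^2$ is actually the \emph{optimal} Blaschke product (a claim the paper leaves unproved), at the cost of leaning on the Section 6.1 machinery (Theorem \ref{theorem:2} and the existence of an extremal $f$ of the form $B\circ\varphi$), which the paper's direct computation of the right-hand side does not need once the extremal form is granted. Both arguments are sound; yours is arguably the more self-contained of the two as written.
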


\begin{proof}
The function $f$ that achieves the maximum in (\ref{equality}) is of the form
$B \circ \varphi$, where $B$ is a Blaschke product and $\varphi (z) = z/r$ maps 
$\Omega$ to the unit disk.  Since $\varphi (A)$ is a scalar multiple ($1/r$)
of a Jordan block, it is easy to see that the optimal $B$ is $B(z) = z^2$, assuming
$r \leq 1$.  (For $r=1$, one could take $B(z) = z$ or $B(z) = z^2$ since the norm
of a $3$ by $3$ Jordan block with eigenvalue $0$ and its square are both equal to $1$.)
Thus, the left-hand side of (\ref{equality}) is $1/ r^2$.

Next we show that the right-hand side of (\ref{equality}) is equal to $1/ r^2$.
Since $\sigma (s) = r e^{i s/r}$ on $\partial \Omega$, we can write
\[
\frac{\sigma' (s)}{2 \pi i} R ( \sigma (s), A ) = \frac{e^{i s/r}}{2 \pi} 
( r e^{is/r} I - A )^{-1} = \frac{1}{2 \pi r}
\left[ \begin{array}{ccc} 1 & \frac{e^{-is/r}}{r} & \frac{e^{-2is/r}}{r^2} \\
0 & 1 & \frac{e^{-is/r}}{r} \\ 0 & 0 & 1 \end{array} \right] ,
\]
and
\[
\mu ( \sigma (s) , A ) = \frac{1}{2 \pi r} \left[ \begin{array}{ccc}
2 & \frac{e^{-is/r}}{r} & \frac{e^{-2is/r}}{r^2} \\
\frac{e^{is/r}}{r} & 2 & \frac{e^{-is/r}}{r} \\
\frac{e^{2is/r}}{r^2} & \frac{e^{is/r}}{r} & 2 \end{array} \right] .
\]
It is easy to check that for $r \leq 1$, the smallest eigenvalue of this matrix is
\[
\lambda_{\min} ( \mu ( \sigma (s) , A ) ) = \frac{2 r^2 -1}{2 \pi r^3} ,
\]
independent of $s$.  Since the length of $\partial \Omega$ is $2 \pi r$, 
$\delta = -(2 - 1/ r^2 )$ and $2 + \delta = 1/ r^2$.
\end{proof}

\section{Summary and Open Problems}
We have shown how the arguments in \cite{CP2017} can be modified to give information about
regions that contain the spectrum of $A$ but not necessarily all of $W(A)$.  Perhaps the
most interesting regions are disks about the spectrum, which we have shown to be $K$-spectral
sets for $K = \max \{ 1,~2 + \delta \}$, thereby providing a new proof
that if $W(A) \subset {\cal D}$, then ${\cal D}$ is a 2-spectral set for $A$ \cite{OkuboAndo}.

We derived one property of optimal Blaschke products; i.e., Blaschke products that 
maximize $\| B( \Psi ) \|$ where $\Psi$ is a given matrix whose spectrum is in ${\cal D}$.
Specifically, we showed that if $\| B( \Psi ) \| > 1$, then the left and right singular
vectors of $B( \Psi )$ corresponding to the largest singular value must be orthogonal
to each other.  An interesting open problem is to determine other properties of the optimal $B$.

Perhaps the most interesting question raised is whether it is true that if $f = B \circ \varphi$
is the optimal $f$ then $\| f(A) \| \leq \| f(A) + g(A )^{*} \|$.  A proof of this would
establish Crouzeix's conjecture, and a counterexample might lead to a new line of attack.  

\vspace{.2in}
\noindent
{\bf Acknowledgments:}  The authors thank Michel Crouzeix, Michael Overton, and Abbas Salemi
for many helpful discussions and suggestions.  We especially thank Michel Crouzeix for providing
the proof of Theorem \ref{theorem:2} and Abbas Salemi for pointing out that $\delta$ in
Lemma \ref{lemma:1} could be allowed to take on negative values.  We also thank two
anonymous referees for very helpful comments, especially for an improvement of the 
result in Theorem \ref{theorem:1}.

\end{document}